\title{\LARGE \bf
Bounding Optimality Gaps for Non-Convex Optimization Problems: \\ Applications to Nonlinear Safety-Critical Systems
}
\author{Prithvi Akella and Aaron D. Ames$^{1}$
\thanks{*This work was supported by the AFOSR Test and Evaluation Program, grant FA9550-19-1-0302}
\thanks{$^{1}$All authors are with the California Institute of Technology
        {\tt\small \{pakella, ames\}@caltech.edu}}%
}
\begin{document}

\maketitle
\thispagestyle{empty}
\pagestyle{empty}

\begin{abstract}
Efficient methods to provide sub-optimal solutions to non-convex optimization problems with knowledge of the solution's sub-optimality would facilitate the widespread application of nonlinear optimal control algorithms.  To that end, leveraging recent work in risk-aware verification, we provide two algorithms to (1) probabilistically bound the optimality gaps of solutions reported by novel percentile optimization techniques, and (2) probabilistically bound the maximum optimality gap reported by percentile approaches for repetitive applications, \textit{e.g.} Model Predictive Control (MPC).  Notably, our results work for a large class of optimization problems.  We showcase the efficacy and repeatability of our results on a few, benchmark non-convex optimization problems and the utility of our results for controls in a Nonlinear MPC setting.

\end{abstract}

\section{INTRODUCTION} 
Optimal controllers have emerged in recent years as the preeminent choice for controller synthesis as they offer a natural way of expressing (perhaps) disparate control objectives~\cite{lewis2012optimal,kalman1960contributions,locatelli2002optimal}, \textit{e.g.} as in Model Predictive Control (MPC)~\cite{camacho2013model,rawlings2000tutorial,garcia1989model}, control-barrier-function based quadratic programs~\cite{ames2016control,xu2015robustness,grandia2021multi}, and optimal path planning~\cite{raja2012optimal,noreen2016optimal,riviere2020glas}, among others.  Specifying to the first two examples, optimal controllers in this vein leverage existing system models to inform controller choice at the current or successive time-steps, \textit{e.g.} the predictive aspect of MPC references the utilization of a system model to predict how a given input will affect system evolution and uses that prediction to determine optimal input choice for the next few time-steps.  Notably, one-step control-barrier-function-based quadratic programs ensure convexity for nonlinear system models, though predicting over these models remains a non-convex problem~\cite{ma2021model,agrawal2017discrete,zeng2021enhancing,zeng2021safety}.

The utility of these controllers inspired efforts to develop and exploit rapid solution techniques for these non-convex optimization problems, though some improvement avenues still exist.  For example, typical approaches in the literature include Model Predictive Path Integral Control (MPPI)~\cite{williams2017model,gandhi2021robust}, Collocation methods~\cite{tamimi2009nonlinear,tamimi2010combined}, Bayesian Optimization~\cite{andersson2016model,jain2020computing}, Learning-based methods~\cite{vaupel2020accelerating,limon2017learning}, and more recently, percentile-based methods~\cite{akella2022sample,akella2023probabilistic}.  In specific for the latter three learning-based methods, both bayesian optimization and other learning-based methods guarantee eventual convergence to an optimal solution but do not provide sample complexity bounds.  Percentile methods offer robust sample complexity bounds and have seen considerable success at efficiently providing sub-optimal solutions to constrained nonlinear optimal controllers.  However, unlike the first two methods, they fail to provide even probabilistic bounds on the optimality gap between the reported percentile solution and the true optimal value.

\begin{figure}[t]
    \centering
    \includegraphics[width = \columnwidth]{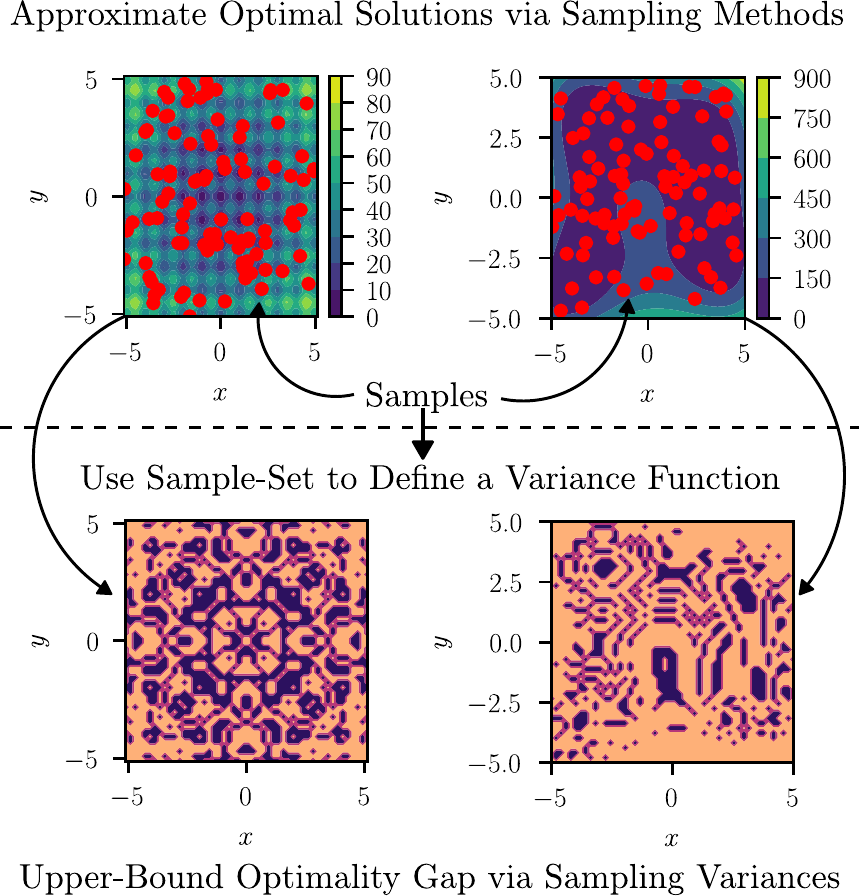}
    \caption{An overview of our proposed method.  Sampling methods employed in risk-aware verification provide decent results to non-convex optimization problems, though their optimality gaps are not well-understood.  By defining a variance function around the previously generated sample set, we can implicitly identify decisions (orange on the bottom) whose variances exceed the optimality gap of our reported sampling solution and identify such an optimality gap via a second application of the same sampling methods.} \vspace{-0.2 in}
    \label{fig:title}
\end{figure}

\newidea{Our Contribution:} We aim to provide probabilistic guarantees on the optimality gap of percentile solutions --- the deviance in cost evaluation of the percentile solution and the minimum function value --- for their respective optimization problems.  In doing so, we hope to develop the first in a line of algorithms that provide robust sample complexity bounds for efficient identification of sub-optimal solutions with knowledge of the solution's sub-optimality.  To that end, our contributions are twofold:
\begin{itemize}
\item Subject to a fairness assumption, we provide sample-complexity bounds for the identification of the optimality gap for percentile solutions to a large class of optimization problems.
\item For optimization problems to be solved repeatedly, \textit{e.g.} MPC, we provide a method to determine the probability of percentile solutions exhibiting optimality gaps lower than a determined upper bound.
\end{itemize}
We develop such a procedure to offer efficient solutions to constrained optimal control problems with known sample complexity and sub-optimality bounds.  This procedure would deviate from existing approaches insofar as it would consider general (perhaps) nonlinear constraints for optimal controllers, and provide sample-complexity results on the sub-optimality of the provided solution --- a discrepancy with existing methods, \textit{e.g.} MPPI, Collocation, Bayesian Optimization, and other, learning-based approaches.

\newidea{Structure:} Section~\ref{sec:percentile_review} reviews percentile optimization.  Section~\ref{sec:bounding_gap} outlines our solution to bound the optimality gaps of percentile solutions.  Section~\ref{sec:repetitive_gap} extends these results for optimization problems to be solved repeatedly, \textit{e.g.} Nonlinear MPC.  Finally, Section~\ref{sec:examples} provides examples to validate our theoretical results. 
 Specifically, Section~\ref{sec:tsp} validates the probabilistic bounding results of Section~\ref{sec:bounding_gap}, and Section~\ref{sec:robotarium} illustrates the utility of the results in Section~\ref{sec:repetitive_gap} in a Nonlinear MPC setting.

\section{A Review of Percentile Optimization}
\label{sec:percentile_review}
To facilitate the statement of our contributions, we first review percentile optimization in this section.  Consider the following optimization problem:
\begin{equation}
    \label{eq:general_opt}
    J^* = \min_{\decisionvariable \in \decisionspace}~J(\decisionvariable),
\end{equation}
subject to the following assumption:
\begin{assumption}
\label{assump:boundedness}
The decision space $\decisionspace$ is a set with bounded volume, \textit{i.e.} $\int_{\decisionspace}~1~ds = V_{\decisionspace} < \infty$ or $\decisionspace$ has a finite number of elements.  Furthermore, the cost function $J$ is bounded over $\decisionspace$, \textit{i.e.} $\exists~m,M \in \mathbb{R}, \suchthat m\leq J(\decisionvariable) \leq M,~\forall~\decisionvariable \in \decisionspace$.
\end{assumption}
This assumption permits us to define two functions, $\mathcal{V}$, which corresponds to the volume fraction occupied by a subset $A$ of $\decisionspace$, and $F$, which corresponds to the set of strictly better decisions for a provided decision $\decisionvariable' \in \decisionspace$ (we assume continuous spaces for notational ease):
\begin{gather}
    \label{eq:volume_fraction}
    \mathcal{V}(A) = \frac{\int_A~1~ds}{\int_{\decisionspace}~1~ds}, \\
    \label{eq:falsifying_set}
    F(\decisionvariable') = \{\decisionvariable \in \decisionspace~|~J(\decisionvariable) < J(\decisionvariable')\}.
\end{gather}
Naturally then, for a given decision $\decisionvariable' \in \decisionspace$, were $\mathcal{V}(F(\decisionvariable')) \leq \epsilon$ for some $\epsilon \in (0,1]$, \textit{i.e.} $\decisionvariable'$ is such that the volume fraction of strictly better decisions is no more than $\epsilon$, then $\decisionvariable'$ would be in the $100(1-\epsilon)\%$-ile with respect to minimizing $J$.  Likewise, the associated minimum cost of such a decision $J(\decisionvariable')$ should also be a probabilistic lower bound on achievable costs.  Both of these notions are expressed formally in the theorem below, which combines similar results from~\cite{akella2022sample,akella2022scenario}.  To facilitate the statement of results to follow, we will first state the assumption, then the theorem.
\begin{assumption}
    \label{assump:percentile}
    Let $I_1=\{(\decisionvariable_i,J(\decisionvariable_i))\}_{i=1}^{N_p}$ be a set of $N_p$ decisions and costs for decisions $\decisionvariable_i$ sampled independently via $\uniform[\decisionspace]$, with $\zeta^*_{N_p}$ the minimum sampled cost and $\decisionvariable^*_{N_p}$ the (perhaps) non-unique decision with minimum cost.
\end{assumption}
\begin{theorem}
\label{thm:prob_optimality}
Let Assumptions~\ref{assump:boundedness} and~\ref{assump:percentile} hold. Then $\forall~\epsilon \in [0,1]$, the probability of sampling a decision whose cost is at-least $\zeta^*_{N_p}$ is at minimum $1-\epsilon$ with confidence $1-(1-\epsilon)^{N_p}$, \textit{i.e.}
\begin{equation}
    \label{eq:prob_ver}
    \prob^{N_p}_{\uniform[\decisionspace]}
    \left[\prob_{\uniform[\decisionspace]}\left[J(\decisionvariable) \geq \zeta^*_{N_p}\right] \geq 1-\epsilon \right] \geq 1-(1-\epsilon)^{N_p}.
\end{equation}
Furthermore, $\forall~\epsilon \in (0,1]$, $\decisionvariable^*_{N_p}$ is in the $100(1-\epsilon)\%$-ile with minimum confidence $1-(1-\epsilon)^{N_p}$, \textit{i.e.}
\begin{equation}
    \label{eq:percent_opt}
    \prob^{N_p}_{\uniform[\decisionspace]}\left[\mathcal{V}(F(\decisionvariable^*_{N_p})) \leq \epsilon \right] \geq 1-(1-\epsilon)^{N_p}.
\end{equation}
\end{theorem}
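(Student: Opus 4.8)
The plan is to reduce both displayed inequalities to the single estimate
$\prob^{N_p}_{\uniform[\decisionspace]}\big[\mathcal{V}(F(\decisionvariable^*_{N_p})) > \epsilon\big] \leq (1-\epsilon)^{N_p}$ for $\epsilon \in (0,1)$. The two stated inequalities are in fact equivalent: since every $\decisionvariable_i$ is drawn from $\uniform[\decisionspace]$, a volume fraction is literally a probability, $\mathcal{V}(A) = \prob_{\uniform[\decisionspace]}[\decisionvariable \in A]$ for measurable $A \subseteq \decisionspace$, and applying this conditionally on the sample set with $A = F(\decisionvariable^*_{N_p}) = \{\decisionvariable \in \decisionspace \mid J(\decisionvariable) < \zeta^*_{N_p}\}$ gives $\mathcal{V}(F(\decisionvariable^*_{N_p})) = 1 - \prob_{\uniform[\decisionspace]}[J(\decisionvariable) \geq \zeta^*_{N_p}]$; hence $\{\mathcal{V}(F(\decisionvariable^*_{N_p})) \leq \epsilon\}$ and $\{\prob_{\uniform[\decisionspace]}[J(\decisionvariable) \geq \zeta^*_{N_p}] \geq 1-\epsilon\}$ are the same event. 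The boundary cases $\epsilon \in \{0,1\}$ I would dispatch by inspection, since there the claimed confidence is $0$ or the claimed event has probability one.

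I would then pass to one dimension. Define $G:\mathbb{R}\to[0,1]$ by $G(t) = \prob_{\uniform[\decisionspace]}[J(\decisionvariable) < t]$; this is nondecreasing, and by Assumption~\ref{assump:boundedness} it equals $0$ for $t \leq m$ and $1$ for $t > M$. By construction $\mathcal{V}(F(\decisionvariable_i)) = G(J(\decisionvariable_i))$, and since $\zeta^*_{N_p} = \min_i J(\decisionvariable_i)$ and $G$ is monotone, $\mathcal{V}(F(\decisionvariable^*_{N_p})) = G(\zeta^*_{N_p}) = \min_{1\leq i\leq N_p} G(J(\decisionvariable_i))$. Because the $\decisionvariable_i$ are i.i.d.\ under $\uniform[\decisionspace]$, the bad-event probability factors as $\big(\prob_{\uniform[\decisionspace]}[G(J(\decisionvariable)) > \epsilon]\big)^{N_p}$, so it remains to show $\prob_{\uniform[\decisionspace]}[G(J(\decisionvariable)) > \epsilon] \leq 1-\epsilon$. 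For that I would set $q = \inf\{t \in \mathbb{R} \mid G(t) > \epsilon\}$, which is finite by boundedness of $J$; monotonicity gives $G(t)\leq\epsilon$ for $t<q$ and $G(t)>\epsilon$ for $t>q$, so $\{G(J(\decisionvariable))>\epsilon\}$ lies in $\{J(\decisionvariable)\geq q\}$ when $G(q)>\epsilon$ and in $\{J(\decisionvariable)>q\}$ when $G(q)\leq\epsilon$. In the first case its probability is $1-G(q)<1-\epsilon$; in the second I would note that along any $t_n\downarrow q$ with $t_n>q$ one has $\{J(\decisionvariable)<t_n\}\downarrow\{J(\decisionvariable)\leq q\}$, so $G(t_n)\to\prob_{\uniform[\decisionspace]}[J(\decisionvariable)\leq q]$, and $G(t_n)>\epsilon$ then forces $\prob_{\uniform[\decisionspace]}[J(\decisionvariable)\leq q]\geq\epsilon$, whence $\prob_{\uniform[\decisionspace]}[J(\decisionvariable)>q]\leq 1-\epsilon$. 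Combining with the factorization closes the argument; the finite-$\decisionspace$ case is identical, with sums replacing integrals in the definition of $\mathcal{V}$.

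The only genuinely delicate point I anticipate is that last estimate, $\prob_{\uniform[\decisionspace]}[G(J(\decisionvariable)) > \epsilon] \leq 1-\epsilon$. If the law of $J(\decisionvariable)$ under $\uniform[\decisionspace]$ has no atoms, $G(J(\decisionvariable))$ is uniform on $[0,1]$ and the inequality is an equality --- which is also why the confidence $(1-\epsilon)^{N_p}$ is the right, and in general tight, bound, in line with the scenario/order-statistic results already cited --- whereas atoms make $G$ discontinuous and are exactly what necessitate the split on the sign of $G(q)-\epsilon$ and careful bookkeeping of strict versus non-strict inequalities. Everything else (rewriting volume fractions as probabilities, the monotonicity/$\min$ identity, and the i.i.d.\ factorization) is routine.
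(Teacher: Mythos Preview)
Your argument is correct and self-contained. The reduction of both displayed inequalities to a single bad-event bound via $\mathcal{V}(A)=\prob_{\uniform[\decisionspace]}[A]$ is clean, the order-statistic factorization is the right move, and your case split on $G(q)\lessgtr\epsilon$ handles atoms of the pushforward law of $J$ properly; the monotonicity argument showing $G(t)>\epsilon$ for all $t>q$ is what makes the right-limit step go through.

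The paper, however, does not actually prove this theorem: it is stated as a combination of results imported from~\cite{akella2022sample,akella2022scenario}, and the closely related Theorem~\ref{thm:upper_bound_var} is likewise dispatched by citing the scenario approach of Campi~\cite{campi2008exact}. So there is no in-paper proof to compare against line by line. What can be said is that the cited scenario-approach results are more general machinery (they bound violation probabilities for sampled convex programs and then specialize), whereas your route is a direct, elementary order-statistics computation tailored to the unconstrained ``take the sample minimum'' setting here. Your approach buys transparency and avoids external dependencies; the scenario-approach citation buys brevity and situates the result in a broader framework the paper reuses elsewhere. Either is adequate for the statement as written.
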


\section{Bounding the Optimality Gap}
To formally bound the optimality gap, we must first define it.  To that end, consider the same general optimization problem as per~\eqref{eq:general_opt}. The optimality gap is defined as follows.
\begin{definition}
\label{def:optimalitygap}
For general optimization problems of the form in~\eqref{eq:general_opt}, the \textit{optimality gap} of a decision $\decisionvariable \in \decisionspace$, denoted as $\optimalitygap(\decisionvariable)$, is the deviance between the decision and optimal values, \textit{i.e.} $\optimalitygap(\decisionvariable) = J(\decisionvariable) - J^*$.
\end{definition}
\noindent Then our formal problem statement follows.
\begin{problem}
Let Assumption~\ref{assump:percentile} hold, and let the optimality gap $\optimalitygap$ be as per Definition~\ref{def:optimalitygap}.  Identify an upper bound to $\optimalitygap(\decisionvariable^*_{N_p})$ and the probability with which this bound holds. 
\end{problem}

\subsection{Results for Bounded Optimization Problems}
\label{sec:bounding_gap}
First, we note that if Assumption~\ref{assump:percentile} holds, we can define the following variance function $\variance$ over the set $I_1$:
\begin{equation}
    \label{eq:variance}
    \variance: \decisionspace \to \mathbb{R} \suchthat \variance(\decisionvariable) = \min_{\decisionvariable_i \in D \subseteq I_1}~|J(\decisionvariable) - J(\decisionvariable_i)|.
\end{equation}
The restriction of $\decisionvariable \in D \subseteq I_1$ in the definition of $\variance$ above is purely for practical purposes.  From a theoretical standpoint, one could use the entire information set $I_1$ to define $\mathbb{V}$, but this tends to increase sample requirements as will be discussed in sections to follow (\textit{e.g.} Figure~\ref{fig:changing_V})

Intuitively then, we aim to maximize $\variance$ via a percentile method to identify a variance that supersedes the optimality gap $\optimalitygap(\decisionvariable^*_{N_p})$ of our chosen decision.  To do so, we first require the following fairness assumption --- that it is possible to sample variances at least as large as the optimality gap, as otherwise, it would be impossible to take a percentile approach.  To formally state this assumption, we first define $\Omega_r$ to be the $r$-level set of $\variance$:
\begin{equation}
    \label{eq:level_set}
    \Omega_r = \{\decisionvariable \in \decisionspace~|~\variance(s) \leq r\}.
\end{equation}
\begin{assumption}
    \label{assump:fairness}
    Let the variance function $\variance$ be as per~\eqref{eq:variance}, let the optimality gap $\optimalitygap$ be as per Definition~\ref{def:optimalitygap}, let $\mathcal{V}$ be as per~\eqref{eq:volume_fraction}, let $\Omega_r$ be as per~\eqref{eq:level_set}, and let Assumption~\ref{assump:percentile} hold.  The level set $\Omega_{\optimalitygap(s^*_{N_p})}$ of decisions whose variance is at most the optimality gap of $\decisionvariable^*_{N_p}$ does not encompass $\decisionspace$, \textit{i.e.}
    \begin{equation}
    \label{eq:fairness}
    \mathcal{V}\left(\decisionspace \setminus \Omega_{\optimalitygap(\decisionvariable^*_{N_p})}\right) > 0.
    \end{equation}
\end{assumption}

\noindent Second, we have the following result regarding level sets $\Omega$.
\begin{lemma}
\label{lem:equivalencies}
Let $\Omega_r$ be as defined in~\eqref{eq:level_set} and let $\mathcal{V}$ be as per~\eqref{eq:volume_fraction}.  The following statements are all equivalent.
\begin{enumerate}[label=(\alph*)]
    \item $r \geq s$,
    \item $\Omega_r \supseteq \Omega_s$,
    \item $\mathcal{V}(\Omega_r) \geq \mathcal{V}(\Omega_s)$,
    \item $\prob_{\uniform[\decisionspace]}[\Omega_r] \geq \prob_{\uniform[\decisionspace]}[\Omega_s]$.
\end{enumerate}
\end{lemma}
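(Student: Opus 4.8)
The plan is to establish the four assertions as pairwise equivalent by closing the short cycle (a) $\Rightarrow$ (b) $\Rightarrow$ (c) $\Rightarrow$ (a), supplemented by the trivial equivalence (c) $\Leftrightarrow$ (d). Two of these links are pure bookkeeping and I would dispatch them immediately. For (a) $\Rightarrow$ (b): if $r \geq s$, then any $\decisionvariable$ with $\variance(\decisionvariable) \leq s$ also satisfies $\variance(\decisionvariable) \leq s \leq r$, so $\Omega_s \subseteq \Omega_r$, which is (b). For (b) $\Rightarrow$ (c): the volume functional $A \mapsto \int_A 1\,ds$ (the Lebesgue integral when $\decisionspace$ is a continuous set, the cardinality when $\decisionspace$ is finite) is monotone under set inclusion, so $\Omega_s \subseteq \Omega_r$ gives $\int_{\Omega_s} 1\,ds \leq \int_{\Omega_r} 1\,ds$; dividing by $V_{\decisionspace}$ yields $\mathcal{V}(\Omega_s) \leq \mathcal{V}(\Omega_r)$, which is (c).

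For (c) $\Leftrightarrow$ (d) I would simply observe that $\uniform[\decisionspace]$ is by construction the distribution whose density is the constant $1/V_{\decisionspace}$ on $\decisionspace$ (equivalently, the mass $1/|\decisionspace|$ at each of the finitely many elements), so that $\prob_{\uniform[\decisionspace]}[A] = \mathcal{V}(A)$ for every measurable $A \subseteq \decisionspace$. Applying this with $A = \Omega_r$ and $A = \Omega_s$ shows that statements (c) and (d) are literally the same inequality, so this equivalence carries no content beyond the definition of the sampling distribution in Assumption~\ref{assump:percentile}.

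This leaves (c) $\Rightarrow$ (a) as the only step with real content, and I expect it to be the main obstacle. I would argue by contraposition: assuming $r < s$, I must produce $\mathcal{V}(\Omega_r) < \mathcal{V}(\Omega_s)$. Since $\Omega_r \subseteq \Omega_s$ always holds, it suffices to show the ``shell'' $\Omega_s \setminus \Omega_r = \{\decisionvariable \in \decisionspace ~|~ r < \variance(\decisionvariable) \leq s\}$ has strictly positive volume fraction. This is the point at which one must use the structure of $\variance$ rather than pure set theory: because $\variance$ is built from the bounded cost $J$ of Assumption~\ref{assump:boundedness} over a decision space of positive volume, the law of $\variance(\decisionvariable)$ under $\uniform[\decisionspace]$ assigns positive mass to every interval meeting its range, so the shell is non-null whenever $(r,s]$ intersects that range, forcing the strict inequality. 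I would make this nondegeneracy explicit in the write-up — it is morally the same flavor of hypothesis as the fairness condition of Assumption~\ref{assump:fairness} — since a flat stretch of $\variance$ would otherwise let two distinct levels $r<s$ produce identical level sets and defeat the strict inequality. With the shell argument in place the cycle closes, and (a)--(d) are all equivalent.
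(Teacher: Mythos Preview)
Your forward implications (a)$\Rightarrow$(b)$\Rightarrow$(c) and the identification (c)$\Leftrightarrow$(d) match the paper exactly; its proof consists of three one-line appeals to the definitions of $\Omega_r$, $\mathcal{V}$, and $\uniform[\decisionspace]$, and it does not separate forward from backward directions at all.

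Where you go further is in isolating (c)$\Rightarrow$(a) as the only step with content, and you are right to be uneasy about it: that direction does \emph{not} follow from the stated hypotheses. Your proposed rescue---that the pushforward of $\uniform[\decisionspace]$ under $\variance$ charges every interval meeting the range of $\variance$---is not a consequence of Assumption~\ref{assump:boundedness}; a bounded $J$ can easily make $\variance$ piecewise constant with a purely atomic law. Even granting that claim, your own caveat ``whenever $(r,s]$ intersects that range'' leaves the complementary case open: if $r$ and $s$ both lie above $\sup_{\decisionvariable\in\decisionspace}\variance(\decisionvariable)$, or sit in any gap of the range, then $r<s$ yet $\Omega_r=\Omega_s$, so (c) holds with equality while (a) fails. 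As literally stated for arbitrary reals $r,s$, only the chain (a)$\Rightarrow$(b)$\Rightarrow$(c)$\Leftrightarrow$(d) is unconditionally valid; the paper simply does not engage with the reverse direction. If you want a genuine equivalence you must impose, as an explicit hypothesis, that $r\mapsto\mathcal{V}(\Omega_r)$ is strictly increasing on the relevant interval---which is what you gesture at, but it is an added assumption, not something derivable from boundedness alone.
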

\begin{proof}
The equivalency between (a) and (b) stems via the definition of $\Omega_r$ in~\eqref{eq:level_set}.  The equivalency between (b) and (c) stems via the definition of the volume fraction function in~\eqref{eq:volume_fraction}.  Finally, the equivalency between (c) and (d) stems via the uniform distribution assigning probabilistic weight to subsets of $A\subseteq \decisionspace$ equivalent to $\mathcal{V}(A)$.
\end{proof}

\noindent Third, based on our fairness assumption, we know there exists a non-zero probability of sampling decisions such that their variances are at least the optimality gap.
\begin{lemma}
    \label{lem:min_prob}
    Let Assumption~\ref{assump:fairness} hold, then there exists $p > 0$ corresponding to the probability of uniformly sampling a decision $\decisionvariable \in \decisionspace \setminus \Omega_{\optimalitygap(\decisionvariable^*_{N_p})}$, \textit{i.e.}
    \begin{equation}
        \label{eq:min_prob}
        \prob_{\uniform[\decisionspace]}\left[\decisionspace \setminus \Omega_{\optimalitygap(\decisionvariable^*_{N_p})}\right] = p > 0.
    \end{equation}
\end{lemma}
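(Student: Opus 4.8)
The plan is to reduce the claim directly to Assumption~\ref{assump:fairness} via the identity between the uniform measure and the volume-fraction function. First I would fix an arbitrary realization of the sample set $I_1$ from Assumption~\ref{assump:percentile}; this fixes $\decisionvariable^*_{N_p}$, hence the scalar $\optimalitygap(\decisionvariable^*_{N_p})$ and the (now deterministic) level set $\Omega_{\optimalitygap(\decisionvariable^*_{N_p})}$ defined in~\eqref{eq:level_set}. Consequently the inner probability in~\eqref{eq:min_prob} is a well-defined number for this realization, and it suffices to exhibit a strictly positive lower bound on it.

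Next I would invoke the characterization of $\uniform[\decisionspace]$ already used in the proof of Lemma~\ref{lem:equivalencies}: the uniform distribution assigns to any measurable $A \subseteq \decisionspace$ a probability mass exactly equal to $\mathcal{V}(A)$ (in the finite-cardinality case of Assumption~\ref{assump:boundedness}, $\mathcal{V}$ is the normalized counting measure and the same identity holds). Applying this with $A = \decisionspace \setminus \Omega_{\optimalitygap(\decisionvariable^*_{N_p})}$ gives
\[
\prob_{\uniform[\decisionspace]}\left[\decisionspace \setminus \Omega_{\optimalitygap(\decisionvariable^*_{N_p})}\right] = \mathcal{V}\left(\decisionspace \setminus \Omega_{\optimalitygap(\decisionvariable^*_{N_p})}\right).
\]
By Assumption~\ref{assump:fairness}, specifically~\eqref{eq:fairness}, the right-hand side is strictly positive; setting $p$ equal to this common value yields exactly~\eqref{eq:min_prob}.

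The only technical point worth checking is that $\decisionspace \setminus \Omega_{\optimalitygap(\decisionvariable^*_{N_p})}$ is measurable, so that both sides of the displayed equation are defined. I would note this follows once $\variance$ is measurable, which holds because $\variance$ in~\eqref{eq:variance} is a finite minimum of the maps $\decisionvariable \mapsto |J(\decisionvariable) - J(\decisionvariable_i)|$ built from the (assumed bounded, and taken measurable) cost $J$, so its sublevel sets $\Omega_r$ are measurable. Beyond this measure-theoretic bookkeeping there is no real obstacle: the lemma is essentially a restatement of the fairness assumption once the uniform-measure/volume-fraction identity is in hand.
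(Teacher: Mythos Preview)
Your proposal is correct and follows exactly the same route as the paper: invoke the identity between the uniform distribution and the volume-fraction function $\mathcal{V}$, then apply the strict positivity granted by Assumption~\ref{assump:fairness} (equation~\eqref{eq:fairness}). The paper's own proof is a one-line appeal to these two facts; your version merely adds the measurability bookkeeping and makes explicit that $I_1$ is fixed, which is fine but not strictly necessary here.
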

\begin{proof}
    The result holds by definition of the uniform distribution over $\decisionspace$ and equation~\eqref{eq:fairness}.
\end{proof}

Then the main result in the utilization of a percentile approach to upper bound the optimality gap stems from the prior lemmas and assumption.  Formally, we aim to take a percentile approach to the following optimization problem:
\begin{equation}
    \label{eq:variance_opt}
    \max_{\decisionvariable \in \decisionspace}~\variance(\decisionvariable),
\end{equation}
which results in the following theorem.
\begin{theorem}
    \label{thm:bound_gap}
    Let Assumptions~\ref{assump:boundedness} and~\ref{assump:fairness} hold, let $p$ satisfy~\eqref{eq:min_prob}, let $I_2 = \{(\decisionvariable_i,\variance(\decisionvariable_i))\}_{i=1}^{N_v}$ be a set of $N_v$ decisions $\decisionvariable_i$ independently sampled from $\uniform[\decisionspace]$ with their corresponding variances as per~\eqref{eq:variance} and with $\variance^*_{N_v}$ the maximum sampled variance.  Then, $\forall \epsilon \in [0,p]$, $\variance^*_{N_v}$ exceeds the optimality gap of the percentile solution with confidence $1-(1-\epsilon)^{N_v}$, \textit{i.e.}
    \begin{equation}
        \label{eq:prob_bound_gap}
        \prob^{N_v}_{\uniform[\decisionspace]}\left[\variance^*_{N_v} \geq \optimalitygap(\decisionvariable^*_{N_p}) \right] \geq 1-(1-\epsilon)^{N_v}.
    \end{equation}
\end{theorem}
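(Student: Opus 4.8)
The plan is to mirror the percentile argument underlying Theorem~\ref{thm:prob_optimality}, but applied to the maximization problem~\eqref{eq:variance_opt} rather than to a minimization. The core observation is that the reported maximum sampled variance $\variance^*_{N_v}$ exceeds $\optimalitygap(\decisionvariable^*_{N_p})$ whenever at least one of the $N_v$ fresh samples in $I_2$ avoids the level set $\Omega_{\optimalitygap(\decisionvariable^*_{N_p})}$, and the fairness assumption (through Lemma~\ref{lem:min_prob}) guarantees that each draw avoids that level set with some fixed positive probability $p$.

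First I would condition on the information set $I_1$, so that $\variance$, the optimality gap $\optimalitygap(\decisionvariable^*_{N_p})$, the level set $\Omega_{\optimalitygap(\decisionvariable^*_{N_p})}$, and the constant $p$ of Lemma~\ref{lem:min_prob} are all fixed deterministic quantities, and the only randomness is in the $N_v$ i.i.d.\ draws comprising $I_2$ (which are independent of $I_1$). Next I would establish the event inclusion
\[
\bigl\{\,\exists\, i\in\{1,\dots,N_v\}:\decisionvariable_i\in\decisionspace\setminus\Omega_{\optimalitygap(\decisionvariable^*_{N_p})}\,\bigr\}\ \subseteq\ \bigl\{\,\variance^*_{N_v}\geq\optimalitygap(\decisionvariable^*_{N_p})\,\bigr\},
\]
which follows directly from the definition of the level set in~\eqref{eq:level_set}: any sampled $\decisionvariable_i\notin\Omega_{\optimalitygap(\decisionvariable^*_{N_p})}$ satisfies $\variance(\decisionvariable_i)>\optimalitygap(\decisionvariable^*_{N_p})$, and hence $\variance^*_{N_v}=\max_i\variance(\decisionvariable_i)$ is at least this large. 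Using independence of the draws together with Lemma~\ref{lem:min_prob} (each draw lands in $\decisionspace\setminus\Omega_{\optimalitygap(\decisionvariable^*_{N_p})}$ with probability $p$, hence misses it with probability $1-p$), the probability that the left-hand event fails for all $N_v$ draws is exactly $(1-p)^{N_v}$, giving
\[
\prob^{N_v}_{\uniform[\decisionspace]}\left[\variance^*_{N_v}\geq\optimalitygap(\decisionvariable^*_{N_p})\right]\ \geq\ 1-(1-p)^{N_v}.
\]
Finally, for any $\epsilon\in[0,p]$ we have $1-\epsilon\geq 1-p\geq 0$, so $(1-\epsilon)^{N_v}\geq(1-p)^{N_v}$ and therefore $1-(1-p)^{N_v}\geq 1-(1-\epsilon)^{N_v}$, which chains with the previous display to yield~\eqref{eq:prob_bound_gap}.

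I do not expect a genuine analytic obstacle here; the substantive content sits in Assumption~\ref{assump:fairness} and Lemma~\ref{lem:min_prob}, which make $p>0$ and thus render the bound nonvacuous. The two points that require care are (i) the conditioning on $I_1$: since $\optimalitygap(\decisionvariable^*_{N_p})$ and $p$ are themselves data-dependent, one must be explicit that $I_2$ is drawn independently of $I_1$ so that the $(1-p)^{N_v}$ factor is legitimate; and (ii) the strict-versus-nonstrict inequality in the definition of $\Omega_r$, so that the one-directional event inclusion above is correctly oriented (landing in the complement gives $\variance>\optimalitygap$, which is all that is needed to conclude $\variance^*_{N_v}\geq\optimalitygap$ on the max). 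Everything else is the standard scenario/percentile counting step already used for Theorem~\ref{thm:prob_optimality}.
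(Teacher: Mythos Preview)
Your argument is correct and, in fact, more direct than the paper's. You go straight to the elementary event inclusion ``at least one of the $N_v$ draws lands outside $\Omega_{\optimalitygap(\decisionvariable^*_{N_p})}$'' $\subseteq$ ``$\variance^*_{N_v}\geq\optimalitygap(\decisionvariable^*_{N_p})$'', compute the failure probability as $(1-p)^{N_v}$ by independence, and then monotonize in $\epsilon\in[0,p]$. The paper instead invokes Theorem~\ref{thm:prob_optimality} as a black box on the maximization~\eqref{eq:variance_opt} to obtain the nested statement $\prob^{N_v}[\prob[\Omega_{\variance^*_{N_v}}]\geq 1-\epsilon]\geq 1-(1-\epsilon)^{N_v}$, then uses $\prob[\Omega_{\optimalitygap(\decisionvariable^*_{N_p})}]=1-p$ together with Lemma~\ref{lem:equivalencies} to convert the inner probability comparison back into the value comparison $\variance^*_{N_v}\geq\optimalitygap(\decisionvariable^*_{N_p})$. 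Your route avoids both Theorem~\ref{thm:prob_optimality} and Lemma~\ref{lem:equivalencies} entirely; the paper's route has the expository advantage of making explicit that this theorem is a second instantiation of the same percentile machinery. The two caveats you flag (conditioning on $I_1$, and the strict/nonstrict inequality in the definition of $\Omega_r$) are exactly the right points of care, and you handle both correctly.
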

\begin{proof}
    First, we know there exists a $p > 0$ satisfying~\eqref{eq:min_prob} via Lemma~\ref{lem:min_prob}.  Second, we know that for the optimization problem~\eqref{eq:variance_opt}, the decision space $\decisionspace$ and objective function $\variance$ are bounded --- this stems via Assumption~\ref{assump:boundedness}.  This permits us to take a percentile approach to solving~\eqref{eq:variance_opt}.  Via Theorem~\ref{thm:prob_optimality} and looking at~\eqref{eq:variance_opt} as a minimization, we have the following result substituting terms for~\eqref{eq:prob_ver} and for all $\epsilon \in [0,1]$:
    \begin{equation}
    \prob^{N_v}_{\uniform[\decisionspace]}
    \left[\prob_{\uniform[\decisionspace]}\left[\variance(\decisionvariable) \leq \variance^*_{N_v}\right] \geq 1-\epsilon \right] \geq 1-(1-\epsilon)^{N_v}.
    \end{equation}
    The set in the interior probability corresponds to the level set of $\variance^*_{N_v}$, \textit{i.e.}
    \begin{equation}
    \prob^{N_v}_{\uniform[\decisionspace]}
    \left[\prob_{\uniform[\decisionspace]}\left[\Omega_{\variance^*_{N_v}}\right] \geq 1-\epsilon \right] \geq 1-(1-\epsilon)^{N_v}.
    \end{equation}
    Finally, via~\eqref{eq:min_prob}, we know that the probability of sampling a decision in the level set corresponding to the optimality gap is $1-p$.  Restricting to $\epsilon \in [0,p]$ which implies that $1-\epsilon \geq 1-p$ and substituting terms in the inequality above, we have the following result.
    \begin{equation}
    \prob^{N_v}_{\uniform[\decisionspace]}
    \left[\prob_{\uniform[\decisionspace]}\left[\Omega_{\variance^*_{N_v}}\right] \geq \prob_{\uniform[\decisionspace]}\left[\Omega_{\optimalitygap(\decisionvariable^*_{N_p})}\right] \right] \geq 1-(1-\epsilon)^{N_v}.
    \end{equation}
    Then, the final result holds due to Lemma~\ref{lem:equivalencies}.
\end{proof}

In summary, Theorem~\ref{thm:bound_gap} tells us that if we wish to bound the optimality gap of a percentile solution, we need to evaluate the variance of $N_v$ uniform samples $\decisionvariable$ from $\decisionspace$ with respect to a subset of the information set $I_1$ utilized to generate the percentile solution.  In practice, however, the exact probability $p$ of sampling decisions with large enough variance will be unknown to the practitioner \textit{apriori}.  In these cases, it suffices to assume a small enough value for $\epsilon$, \textit{i.e.} $10^{-2}$ or smaller, is smaller than $p$.  Examples along this vein will be provided in the following section.  Notably, this result implies that we can utilize percentile methods to both identify decisions that outperform a large fraction of the decision space and also determine their optimality gap.  Indeed, this result holds even for non-convex optimization problems, provided they satisfy Assumption~\ref{assump:boundedness}.

\subsection{Producing Solutions with Maximum Optimality Gaps}
\label{sec:repetitive_gap}
The prior section provides a method to determine the upper bound on the optimality gap of a provided solution via a secondary sampling scheme.  This section provides a method to remove the secondary sampling requirement for similar optimization problems to be solved successively.  In other words, consider the following general form of optimization problems, where each instance $l$ satisfies Assumption~\ref{assump:boundedness}:
\begin{equation}
\label{eq:repetitive_opt}
J^{l*} = \min_{\decisionvariable \in \decisionspace^l}~J^l(\decisionvariable),~(\decisionspace^l,J^l) \in \optimalityset,~l \in L.
\end{equation}
Furthermore, we assume that it is possible to randomly sample indices $l$ from $L$, \textit{e.g.} via the uniform distribution.

\newidea{Example Setting:} Here, $\optimalityset$ is a set containing pairs of objective functions and decision spaces.  To provide an example consistent with the sections to follow, consider the following nonlinear dynamical system with state $x$ and input $u$:
\begin{equation}
    \label{eq:nonlinear_model}
    x_{k+1} = f(x_k,u_k),~x \in \mathcal{X},~u \in \mathcal{U}
\end{equation}
Provided a cost function over states and inputs, state constraints, and torque bounds, we can construct the following finite-time optimal controller with horizon $H$ for the aforementioned system.  Here, all state constraints are projected to input constraints through prediction over the model~\eqref{eq:nonlinear_model}:
\begin{align}
\label{eq:general_FTOCP} \tag{FTOCP}
& \argmin_{\mathbf{u}=(u^0,u^1,\dots,u^{H-1}) \in \mathcal{U}^H}~& & J(\mathbf{u}, x_k), \\
& ~\qquad \mathrm{subject~to~} & & \mathbf{u} \in \mathbb{U}(x_k) \subseteq \mathcal{U}^H.
\end{align}
The aforementioned finite-time optimal controller~\eqref{eq:general_FTOCP} collapses to the form in~\eqref{eq:repetitive_opt} if we consider an optimality set $\optimalityset$ indexed by states $x \in \mathcal{X}$ --- a specific form of indexing more generally referred to via $l \in L$ in~\eqref{eq:repetitive_opt}.

\newidea{Key Insight:} The critical insight for this section then is as follows. If we were to randomly sample via a distribution $\pi$ over $\optimalityset$, optimization problems of the form in~\eqref{eq:repetitive_opt} and calculate the optimality gap $\optimalitygap^l$ of percentile solutions for that problem, the corresponding gap is a sample of some real-valued random variable.  By taking multiple independent samples of this random variable, we can provide, using recent results on risk-measure estimation, a probabilistic upper bound on this random variable, \textit{i.e.} a probabilistic upper bound on achievable optimality gaps.  Phrased formally, an existing theorem in the literature we will utilize is as follows.
\begin{theorem}
    \label{thm:upper_bound_var}
    Let $X$ be a real-valued random variable with (perhaps) unknown distribution $\pi$, let $\{x_i\}_{i=1}^N$ be a set of $N$ independent samples of $X$, and let $x^*_N$ be the maximum such sample.  $\forall~\epsilon \in [0,1]$, the following statement holds:
    \begin{equation}
        \prob^N_{\pi}\left[\prob_{\pi}[x \leq x^*_N] \geq 1-\epsilon \right] \geq 1-(1-\epsilon)^N.
    \end{equation}
\end{theorem}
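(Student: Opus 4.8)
The plan is to reduce the claim to a one-sided tail bound on the empirical maximum, using the generalized $(1-\epsilon)$-quantile of $\pi$. First I would introduce the cumulative distribution function $G(t) = \prob_{\pi}[x \le t]$, which is nondecreasing and right-continuous, and set $\bar{q} = \inf\{t \in \mathbb{R} : G(t) \ge 1-\epsilon\}$ with the convention $\inf \emptyset = +\infty$. Two elementary facts about $\bar q$ drive the argument: (i) $G(t) < 1-\epsilon$ implies $t < \bar q$ --- indeed, right-continuity and monotonicity give $G(\bar q) \ge 1-\epsilon$, hence $G(t) \ge 1-\epsilon$ for every $t \ge \bar q$, and the contrapositive is (i); and (ii) $\prob_{\pi}[x < \bar q] = \lim_{t \uparrow \bar q} G(t) \le 1-\epsilon$, since minimality of $\bar q$ forces $G(t) < 1-\epsilon$ for all $t < \bar q$.

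The key step is to rewrite the ``bad'' event in terms of the samples. Because $x^*_N = \max_{i} x_i$, fact (i) gives the inclusion $\{\, G(x^*_N) < 1-\epsilon \,\} \subseteq \{\, x^*_N < \bar q \,\} = \{\, x_i < \bar q \ \forall\, i \,\}$. Using independence of the $N$ samples followed by fact (ii),
\begin{equation}
\prob^N_{\pi}\!\left[\, G(x^*_N) < 1-\epsilon \,\right] \;\le\; \prob^N_{\pi}\!\left[\, x_i < \bar q \ \forall\, i \,\right] \;=\; \big(\prob_{\pi}[x < \bar q]\big)^N \;\le\; (1-\epsilon)^N .
\end{equation}
Passing to complements and observing that $G(x^*_N) = \prob_{\pi}[x \le x^*_N]$ yields precisely $\prob^N_{\pi}[\prob_{\pi}[x \le x^*_N] \ge 1-\epsilon] \ge 1-(1-\epsilon)^N$, which is the statement. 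The degenerate cases $\epsilon \in \{0,1\}$, and the case $\bar q = +\infty$, are covered automatically because the right-hand side is then $0$ or $1$.

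I do not expect a genuine obstacle here, only one subtlety worth handling with care: when $\pi$ has atoms, $G$ has jumps and $G(x)$ is not uniformly distributed, so one cannot invoke the probability integral transform; the argument above circumvents this by working with the \emph{strict} inequality $x < \bar q$ and the left limit of $G$ rather than $G(\bar q)$ itself. An essentially equivalent route --- if one prefers to lean on earlier results --- is to note that this theorem is the mirror image of the $\zeta^*_{N_p}$ bound~\eqref{eq:prob_ver} in Theorem~\ref{thm:prob_optimality}: applying that result to the random variable $-x$ and replacing $\uniform[\decisionspace]$ by the arbitrary distribution $\pi$ (the proof of Theorem~\ref{thm:prob_optimality} uses only that the samples are i.i.d., never that the distribution is uniform) gives the claim at once, and the quantile computation above merely renders that reduction self-contained.
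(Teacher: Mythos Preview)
Your argument is correct. The quantile-based bound is the standard order-statistics computation, and you handle the atom case properly by working with the strict event $\{x < \bar q\}$ and the left limit of $G$; the degenerate endpoints are indeed harmless.

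The paper, however, does not prove this statement at all: its ``proof'' is a one-line citation to the scenario-optimization literature (Campi's exact-feasibility result) together with the remark that the statement is a rephrasing of an earlier theorem in~\cite{akella2022sample}. Your proposal therefore takes a genuinely different route --- a direct, self-contained CDF argument rather than an appeal to scenario theory. What your approach buys is that it requires no external machinery and makes the dependence on i.i.d.\ sampling (and nothing else) completely transparent; what the paper's approach buys is brevity and an explicit link to the broader scenario framework that motivates the rest of the paper. Your closing remark that the result is the mirror image of~\eqref{eq:prob_ver} under $x \mapsto -x$ is exactly in the spirit of the paper's citation-based justification, so you have in fact covered both viewpoints.
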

\begin{proof}
    This follows from a scenario argument, see~\cite{campi2008exact}, and is a re-phrasing of Theorem~4 in~\cite{akella2022sample}.
\end{proof}

To employ Theorem~\ref{thm:upper_bound_var}, we require the following definition.
\begin{definition}
    \label{def:optimality_gap_RV}
    Let $\gapRV(N_p)$ be a real-valued random variable with distribution $\pi_{\optimalitygap}$ and samples $\gapRVsample$ defined as follows: (1) Uniformly sample an index $l \in L$, (2) Take a percentile approach to solve~\eqref{eq:repetitive_opt} corresponding to this sampled index $l$, producing the solution $\decisionvariable^{l*}_{N_p}$, (3) Calculate and report as a sample $\gapRVsample$, the optimality gap $\optimalitygap^l\left(\decisionvariable^{l*}_{N_p}\right)$.
\end{definition}
\noindent Then, we can upper bound the optimality gaps of percentile solutions to all optimization problems formed in the set $\optimalityset$, to some minimum probability.  The formal statement will follow.
\begin{theorem}
    \label{thm:probabilistic_ub_optimalitygap}
    Let $\gapRV(N_p)$ be as per Definition~\ref{def:optimality_gap_RV}, and let $\{\gapRVsample_i\}_{i=1}^R$ be a set of $R$ independent samples of $\gapRV(N_p)$ with $\gapRVsample^*_R$ the maximum such sample.  Then $\forall~\epsilon \in [0,1]$, percentile solutions to optimization problems of the form in~\eqref{eq:repetitive_opt} will exhibit optimality gaps less than $\gapRVsample^*_R$ with minimum probability $1-\epsilon$ and confidence $1-(1-\epsilon)^R$, \textit{i.e.}
    \begin{equation}
        \prob^R_{\pi_{\optimalitygap}}\left[\prob_{\pi_{\optimalitygap}}[\gapRVsample \leq \gapRVsample^*_R] \geq 1-\epsilon \right] \geq 1-(1-\epsilon)^R.
    \end{equation}
\end{theorem}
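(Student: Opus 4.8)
The plan is to recognize this statement as a direct instantiation of Theorem~\ref{thm:upper_bound_var}, taking the abstract real-valued random variable $X$ to be $\gapRV(N_p)$ and its distribution $\pi$ to be $\pi_{\optimalitygap}$. The bulk of the work is therefore not in the probabilistic inequality itself but in verifying that $\gapRV(N_p)$ is a legitimate real-valued random variable and that the $R$ samples $\{\gapRVsample_i\}_{i=1}^R$ are genuine independent realizations of it.

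First I would argue that the three-step procedure in Definition~\ref{def:optimality_gap_RV} yields a well-defined finite number almost surely: step~(1) draws $l$ uniformly from $L$; step~(2) applies the percentile method to the instance of~\eqref{eq:repetitive_opt} indexed by $l$, which by hypothesis satisfies Assumption~\ref{assump:boundedness}, so $J^l$ is bounded on $\decisionspace^l$, hence $J^{l*}$ is finite and the percentile solution $\decisionvariable^{l*}_{N_p}$ attains a finite cost; and step~(3) returns $\optimalitygap^l(\decisionvariable^{l*}_{N_p}) = J^l(\decisionvariable^{l*}_{N_p}) - J^{l*}$, a finite, nonnegative number. Crucially, although $\decisionvariable^{l*}_{N_p}$ may be non-unique, $\optimalitygap^l$ depends on it only through its (unique) cost value, so the reported sample $\gapRVsample$ is unambiguous. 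By construction the law of a single draw of $\gapRV(N_p)$ is exactly $\pi_{\optimalitygap}$.

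Next I would note that producing the $R$ samples $\gapRVsample_1,\dots,\gapRVsample_R$ amounts to repeating the full three-step procedure $R$ times with fresh randomness each time --- independent index draws and independent percentile sampling batches --- so the $\gapRVsample_i$ are i.i.d.\ with common distribution $\pi_{\optimalitygap}$, and $\gapRVsample^*_R = \max_i \gapRVsample_i$ is their maximum. Invoking Theorem~\ref{thm:upper_bound_var} with $N \leftarrow R$, $X \leftarrow \gapRV(N_p)$, $\pi \leftarrow \pi_{\optimalitygap}$, and $x^*_N \leftarrow \gapRVsample^*_R$ yields, for all $\epsilon \in [0,1]$,
\begin{equation}
\prob^R_{\pi_{\optimalitygap}}\left[\prob_{\pi_{\optimalitygap}}[\gapRVsample \leq \gapRVsample^*_R] \geq 1-\epsilon \right] \geq 1-(1-\epsilon)^R,
\end{equation}
which is precisely the claim; the ``minimum probability $1-\epsilon$, confidence $1-(1-\epsilon)^R$'' phrasing is just the standard reading of this nested probability, and since each drawn instance is bounded, $\gapRVsample^*_R$ is itself finite.

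The main obstacle I anticipate is not a hard inequality but a modeling/measurability subtlety: ensuring that the percentile sub-routine in step~(2) is a measurable function of its random seed, so that the composition in Definition~\ref{def:optimality_gap_RV} is a bona fide random variable and the i.i.d.\ claim is rigorous. Once that is granted, everything else is bookkeeping that reduces the statement to Theorem~\ref{thm:upper_bound_var}.
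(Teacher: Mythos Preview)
Your proposal is correct and follows exactly the paper's approach: the paper's entire proof is the single line ``Stems via Theorem~\ref{thm:upper_bound_var},'' and your argument is precisely that instantiation, with added (but welcome) care in checking that $\gapRV(N_p)$ is a well-defined real-valued random variable and that the $R$ samples are i.i.d.
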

\begin{proof}
    Stems via Theorem~\ref{thm:upper_bound_var}.
\end{proof}

In short, if we wish to remove the secondary sampling requirement for the determination of optimality gaps, we need to be able to calculate the optimality gap for at least $R$ independently chosen optimization problems of the form~\eqref{eq:repetitive_opt}.  Doing so permits us to make a general statement on the maximum achievable optimality gaps, to some minimum probability.  Now, we will provide a few examples.

\section{Examples}
\label{sec:examples}
\begin{figure}[t]
    \centering
    \includegraphics[width = \columnwidth]{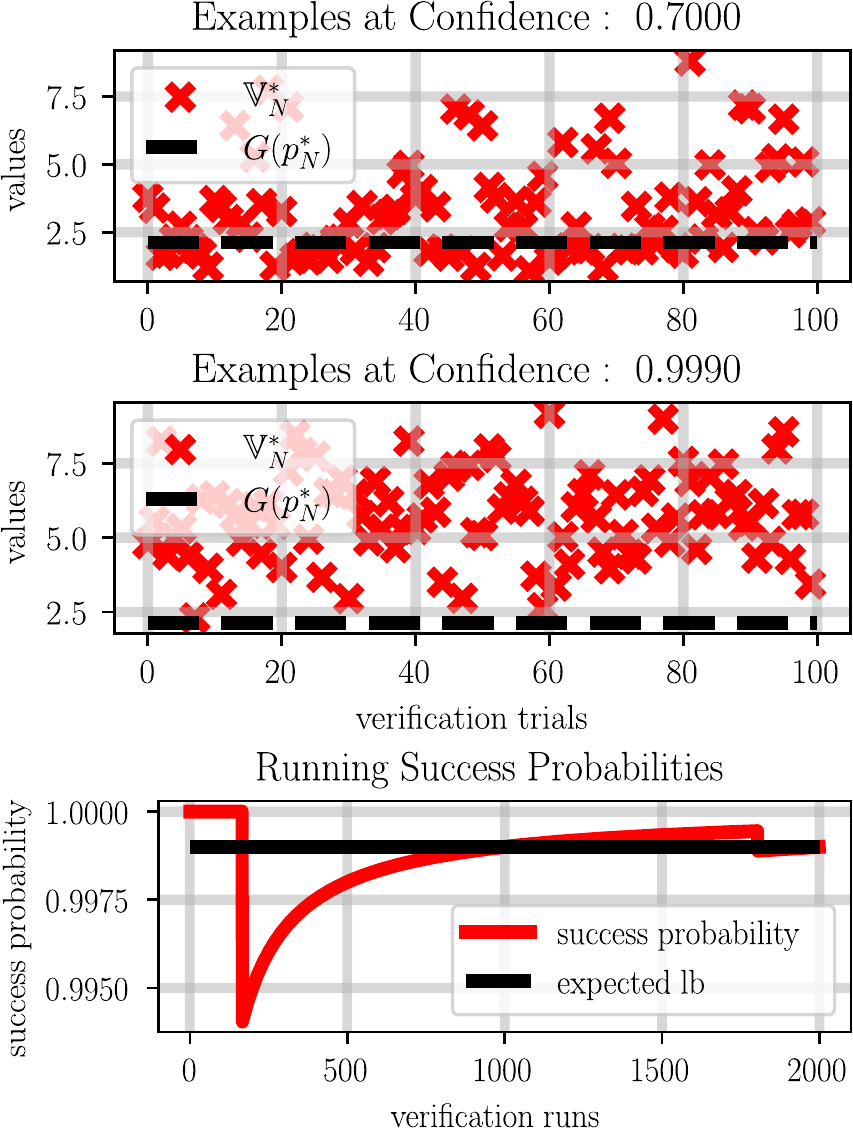}
    \caption{Validation Data for Section~\ref{sec:tsp} corresponding to Theorem~\ref{thm:bound_gap}.  (Top) $100$ reported upper bounds $\variance^*_{N_v}$ using Theorem~\ref{thm:bound_gap} with desired confidence for~\eqref{eq:prob_bound_gap} equal to $0.7$.  (Middle) $100$ reported upper bounds $\variance^*_{N_v}$ with confidence $0.999$.  (Bottom) Running fraction over $2000$ trials of reported upper bounds $\variance^*_{N_v}$ exceeding the true optimality gap $\optimalitygap(p^*_{N_p})$ at confidence level $0.999$.  Notice how the fraction of upper bounds exceeding the optimality gap increases as we increase confidence (top to middle), and the running fraction of upper bounds exceeding the optimality gap converges to our desired confidence (bottom), corroborating Theorem~\ref{thm:bound_gap}.}    \vspace{-0.2 in}
    \label{fig:tsp_examples}
\end{figure}
\subsection{Validating Theorem 2 --- Traveling Salesman}
\label{sec:tsp}
The Traveling Salesman Problem (TSP) is a classic non-convex path planning problem referencing the identification of the path of the shortest distance traversing each node in a set once.  Mathematically, consider a set of waypoints $W = \{w_1,w_2,\dots,w_{|W|}\}~w_i \in \mathbb{R}^2$ and the set of all paths over these waypoints $P = \{(i_1,i_2,\dots,i_{|W|})~|~i_j \in \{1,2,\dots,|W|\},~i_j \neq i_k,~\forall~j\neq k\}$.
Then formally, the Traveling Salesman Problem is to
\begin{equation}
    \min_{p \in P}~\sum_{i=0}^{|W|-1}~\|p_i - p_{i+1}\| + \|p_0 - p_{|W|}\|.
\end{equation}
For a graph with $10$ nodes and $3628800$ possible paths, evaluating $N_p = 5000$ paths and picking the best one identifies a path $p^*_{N_p}$ in the $99.9\%$-ile with at least $99\%$ confidence according to Theorem~\ref{thm:prob_optimality}.  Using a subset $D$ of the corresponding information set $I_1$ for the determination of such a percentile solution (see Assumption~\ref{assump:percentile} for the definition of $I_1$), we define a variance function $\variance$ as per equation~\eqref{eq:variance}.  Finally, to validate the probabilistic results of Theorem~\ref{thm:bound_gap}, we can also calculate the true probability $p$ of uniformly sampling paths that exhibit a variance higher than the optimality gap of our proposed solution $\optimalitygap(p^*_{N_p})$ --- this is the minimum probability assumed to exist via Assumption~\ref{assump:fairness} and defined in equation~\eqref{eq:min_prob}.  For this particular node set and percentile solution $p^*_{N_p}$ the probability $p = 0.1083$.

\begin{table}[t]
    \centering
    \begin{tabular}{|p{0.75cm}|p{0.5cm}|p{0.5cm}|p{1.5cm}|p{1.5cm}|p{1.5cm}|}
    \hline
    \tabcenter{Name} & \tabcenter{$N_p$} & \tabcenter{$N_v$} & \tabcenter{expected \\ success \\ probability~\eqref{eq:prob_bound_gap}} & \tabcenter{true success \\ probability} & \tabcenter{average \\ runtime (ms)} \\
    \hline
    R-2 & $300$ & $300$ & $0.95$ & $\approx 1$ & 5.18\\
    \hline
    R-10 & $300$ & $300$ & $0.95$ & $\approx 1$ & 5.17\\
    \hline
    Ack & $300$ & $300$ & $0.95$ & $\approx 1$ & 5.28\\
    \hline
    Ble & $300$ & $300$ & $0.95$ & $\approx 1$ & 5.17\\
    \hline
    Levi & $300$ & $300$ & $0.95$ & $\approx 1$ & 5.31\\
    \hline
    Himm & $300$ & $300$ & $0.95$ & $\approx 1$ & 5.30\\
    \hline
    \end{tabular}
    \caption{Data for Section~\ref{sec:success_probabilities} for the (R-2) Rastigrin 2-D, (R-10) Rastigrin 10-D, (Ack) Ackley, (Ble) Beale, Levi, and (Himm) Himmelblau benchmark optimization problems.} \vspace{-0.3 in}
    \label{tab:benchmark data}
\end{table}

\begin{figure*}
    \centering
    \includegraphics{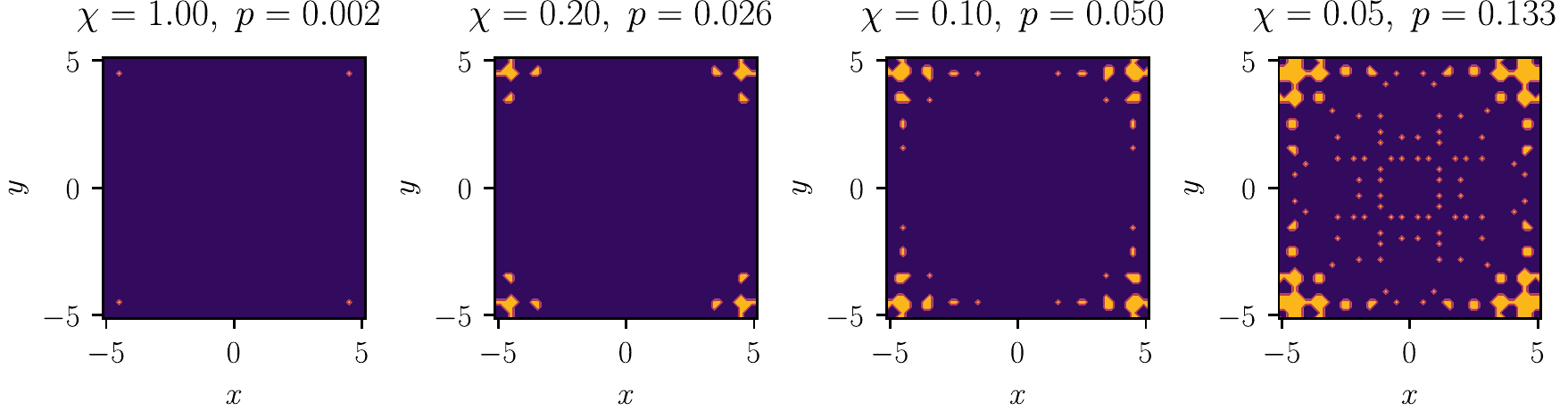}
    \caption{Validation data for Section~\ref{sec:success_probabilities}.  We claim that by varying the amount of information used to generate the variance function $\variance$ in equation~\eqref{eq:variance}, we can change the baseline probability $p$ of sampling a decision whose variance exceeds the optimality gap of a given solution $\optimalitygap(s^*_N)$ (such decisions are highlighted in orange).  Notice that as the volume fractions $\chi$ occupied by the chosen information set $D$ decreases, we see a corresponding increase in the baseline probability $p$.  Section~\ref{sec:success_probabilities} discusses why this inverse relationship holds.}
    \label{fig:changing_V}
\end{figure*}

\newidea{Figure Analysis:} To validate the results of Theorem~\ref{thm:bound_gap}, we solved for the minimum number of samples $N_v$ required to determine an upper bound $\variance^*_{N_v}$ to $\optimalitygap(p^*_{N_p})$ with minimum confidence $0.7$ --- top figure in Figure~\ref{fig:tsp_examples} requiring $11$ samples--- and minimum confidence $0.999$ --- middle figure in Figure~\ref{fig:tsp_examples} requiring $61$ samples.  Both of these minimum sample requirements were identified by setting $\epsilon = p$ in~\eqref{eq:prob_bound_gap} and solving for the minimum integer $N_v$ required to make the right-hand side greater than or equal to our desired confidence.  For each confidence level, we performed $100$ separate trials and reported the maximum variance $\variance^*_{N_v}$ produced by each trial according to Theorem~\ref{thm:bound_gap}.  As can be seen in the corresponding data, increasing the confidence increases the likelihood that the corresponding reported result $\variance^*_{N_v}$ exceeds the true optimality gap $\optimalitygap(p^*_{N_p})$ --- the red $x$'es in the middle figure are all above the black, dashed line, whereas a few dip below the same line in the top figure when we report solutions with lower confidence.  Furthermore, by repeating the procedure once more at confidence $0.999$, taking $2000$ separate verification runs, and recording whether $\variance^*_{N_v} \geq \optimalitygap(p^*_{N_p})$ per run, we can get a sense of the true, running probability that $\variance^*_{N_v} \geq \optimalitygap(p^*_{N_p})$.  As can be seen in the bottom figure, this probability converges to $0.999$ --- the lower bound expected by Theorem~\ref{thm:bound_gap}.  Notably, though, this result implies that we were able to identify a path in the $99\%$-ile that was no more than $1.12$ times the length of the optimal path, by only evaluating $5061$ uniformly chosen path samples, less than $0.14\%$ of the overall decision space.

\subsection{Increasing Success Probabilities --- Benchmark Functions}
\label{sec:success_probabilities}
In a brief remark after defining the variance function in equation~\eqref{eq:variance}, we mentioned that by specific choice of a subset $D$, one could increase the baseline probability $p$ of uniformly choosing samples that exhibit a higher variance than the optimality gap of the reported percentile solution.  This section provides evidence in support of that statement for a few benchmark optimization problems.  The one referenced in Figure~\ref{fig:changing_V}, the 2-d Rastigrin function~\cite{rastrigin1974systems}, is as follows:
\begin{equation}
    \label{eq:rastigrin_example}
    \min_{x \in [-5.12,5.12]^2}~20+\sum_{i=1}^2(x_i^2 -10\cos(2\pi x_i)).
\end{equation}
Both the decision space and objective function are bounded, permitting a percentile solution to~\eqref{eq:rastigrin_example}.  Following Theorem~\ref{thm:prob_optimality} and taking $N_p=100$ samples for such an approach, we generate the information set $I_1$ and percentile solution $x^*_{N_p}$.  Then, by choice of a subset $D$ of $I_1$ for the definition of the variance function in~\eqref{eq:variance}, we claim we can vary the baseline probability $p$ of sampling decisions that exhibit a higher variance than the true optimality gap.  To show this, define $\chi$ to be the volume fraction the subset $D$ in~\eqref{eq:variance} occupies of $I_1$, \textit{i.e.} $\chi = 1$ implies $D = I_1$ and $\chi = 0.05$ implies that $D$ contains $1/20$-th as many elements as $I_1$.

\newidea{Figure Analysis:}  Figure~\ref{fig:changing_V} portrays the results of varying the volume fraction $\chi$ of decisions $D$ utilized to generate the variance function $\variance$ per equation~\eqref{eq:variance}.  The decisions highlighted in orange are those that exhibit a higher variance than the optimality gap of the reported solution.  Notice that as $\chi$ decreases, $p$ increases as indicated in the titles.  This inverse relationship arises as if we consider two sets $D_1 \subset D_2$ utilized for generation of the variance functions $\variance_1,\variance_2$, respectively, then $\forall~\decisionvariable \in \decisionspace$, $\variance_1(s) \geq \variance_2(s)$ by definition of $\variance$ as a minimization problem.  In other words, decreasing the amount of information provided to the variance function increases the corresponding conservativeness of the resulting function, which increases the likelihood of sampling a, now more conservative, upper bound on the optimality gap.  In practice, and in the examples provided in the prior section, using a dilation $\chi = 0.1$ proved most effective, though studying if there exists an optimal volume fraction remains an open problem and the subject of future work.

\newidea{Table Description:} By increasing the success probability $p$ of sampling decisions whose variance exceeds the optimality gap, we can ``blindly" use Theorem~\ref{thm:bound_gap} to bound the optimality gap of percentile solutions to benchmark optimization problems.  Table~\ref{tab:benchmark data} shows our data in this vein.  For each benchmark optimization problem, we produced a percentile solution using $N_p = 300$ samples, constructed a variance function $\variance$ using $10\%$ of the information set $I_1$ generated via the percentile method, and took $N_v = 300$ samples to identify the probabilistic maximum variance $\variance^*_{N_v}$.  Under the assumption that $p \geq 0.01$, Theorem~\ref{thm:prob_optimality} tells us that $\variance^*_{N_v} \geq \optimalitygap(x^*_{N_p})$ with $95\%$ probability --- column 4 in Table~\ref{tab:benchmark data}.  Indeed, over $5000$ trials following the above procedure for each optimization problem, we were successfully able to identify a valid upper bound every time.  Additionally, as the sampling method only requires the evaluation of sampled points, which is relatively quick, the procedure takes very little time to implement, as seen in the rightmost column.

\begin{figure}[t]
    \centering
    \includegraphics[width = \columnwidth]{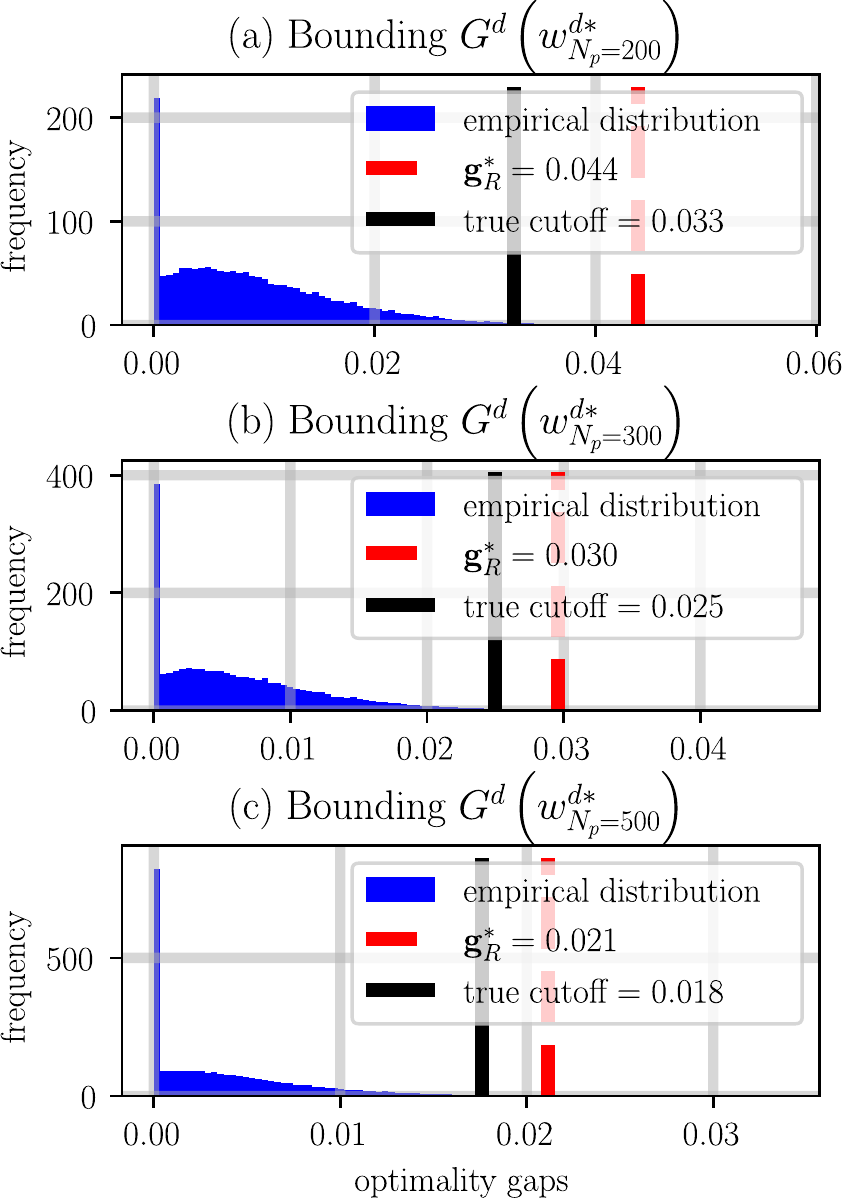}
    \caption{Validation data for Section~\ref{sec:robotarium} in support of Theorem~\ref{thm:probabilistic_ub_optimalitygap}.  We claim that we can upper bound the optimality gap of successive applications of percentile methods to solve optimization problems of the form in~\eqref{eq:repetitive_opt}.  Shown above in red are the calculated upper bounds for the black lines corresponding to the $99\%$ cutoff value of optimality gaps for percentile solutions to~\eqref{eq:augmented_NMPC}.  For the three separate percentile methods shown, we're able to upper bound the true value every time, corroborating Theorem~\ref{thm:probabilistic_ub_optimalitygap}.  This information is further discussed in Section~\ref{sec:robotarium}.}
    \vspace{-0.2 in}
    \label{fig:mpc_gap_info}
\end{figure}

\subsection{Validating Theorem~\ref{thm:probabilistic_ub_optimalitygap} --- Nonlinear MPC}
\label{sec:robotarium}
To validate the results of Theorem~\ref{thm:probabilistic_ub_optimalitygap}, we require a series of optimization problems of the form in~\eqref{eq:repetitive_opt}.  Keeping with the example provided in Section~\ref{sec:repetitive_gap}, we aim to bound the optimality gap of percentile solutions for a Nonlinear MPC controller steering the Robotarium robots~\cite{wilson2020robotarium} --- a collection of agents modelable via unicycle dynamics.  Formally, let $x \in \mathcal{X} = [-1.6,1.6]\times[-1.2,1.2]\times[0,2\pi]$ be the system state, and let $u \in \mathcal{U} = [-0.2,0.2] \times [-\pi,\pi]$ be the control input.  Then, our model is as follows:
\begin{equation}
    \label{eq:unicyle_model}
    \begin{aligned}
        x_{k+1} & = \underbrace{x_k +
        \begin{bmatrix}
        \cos\left(x_k[3]\right) & 0 \\
        \sin\left(x_k[3]\right) & 0 \\
        0 & 1
        \end{bmatrix}u_k(\Delta t = 0.033)}_{f(x_k,u_k)}.
    \end{aligned}
\end{equation}
Furthermore, each agent has a Lyapunov controller $U$ steering it to a provided waypoint $w \in \mathcal{W}$:
\begin{gather}
    U: \mathcal{X} \times \mathcal{W} \triangleq [-1.6,1.6] \times [-1.2,1.2] \to \mathcal{U}.
\end{gather}

We will use $U$ to construct our MPC algorithm, which provides waypoints to steer the system around static and moving obstacles toward at least one goal.  Formally, we overlay an $8\times5$ grid on $\mathcal{W}$ and define the space of operating environments $\mathcal{D}$ as those environments that: (1) have $8$ static obstacles (SO) and $3$ goals ($g$), (2) have controlled ($x_a$) and uncontrolled ($x_o$) agent starting positions outside static obstacles and goals, and (3) have at least one feasible path from the controlled agent's starting location to one of the three goals.  A vector $d = [x_a,x_o,\mathrm{SO},g] \in \mathcal{D}$ corresponds to one such environmental setup. To account for collisions, consider the following barrier function $h$, where $P = [\mathbf{I}_{2 \times 2}~\mathbf{0}_{2 \times 1}]$ projects system states to the plane~\cite{ames2016control}:
\begin{equation}
    \label{eq:robotarium_barrier}
    h(x_a,x_o,d) = \begin{cases}
        -5 & \mbox{in~SO~cell}, \\
        \|P(x_a - x_o)\| - 0.18 & \mbox{else}.
    \end{cases}
\end{equation}

Then the nominal NMPC algorithm minimizes $S:\mathcal{W} \to \mathbb{R}$ --- a function outputting the shortest path distance from a waypoint to the closest goal --- while ensuring that the existing Lyapunov controller $U$ renders the barrier $h$ positive for the next $5$ time steps, \textit{i.e.}, $\forall~j=1,2,\dots,5$, 
\begin{align}
w^*_k = &~\argmin_{w \in \mathcal{W}}~& & S(w), \label{eq:rob_NMPC} \tag{NMPC-A} \\
& \mathrm{subject~to~} & & x^{j}_{k} = f(x^{j-1}_k,u^{j-1}), \label{eq:constr_1} \tag{a}\\
& & & x^0_k = x_k, \label{eq:constr_2} \tag{b} {\color{white} \eqref{eq:constr_2},\eqref{eq:constr_3}}\\
& & & h(x^j_{k,a},x_o,d) \geq 0 \label{eq:constr_3} \tag{c} \\
& & & u^{j-1} = U\left(x^{j-1}_k, w\right), \label{eq:constr_4} \tag{d} \\
& & & 0.05 \leq \|w - x_k\| \leq 0.2.
\end{align}
To ease sampling requirements, we consider an augmented cost $J$ that outputs $100$ whenever a waypoint $w$ fails to satisfy constraints~\eqref{eq:constr_1}-\eqref{eq:constr_4} in~\eqref{eq:rob_NMPC}, yielding the following:
\begin{align}
w^{d*} = &~\argmin_{w \in \mathcal{W}}~& & J(w,d),\label{eq:augmented_NMPC} \tag{NMPC-B} \\
& \mathrm{subject~to~} & & 0.05 \leq \|w - x\| \leq 0.2.
\end{align}
Finally, we note that~\eqref{eq:augmented_NMPC} is equivalent to~\eqref{eq:repetitive_opt}, if we consider as index set $L$, the environment set $\mathcal{D}$.

\newidea{Figure Analysis:} To validate the results of Theorem~\ref{thm:probabilistic_ub_optimalitygap} we uniformly sample $R = 459$ different environments $d \in \mathcal{D}$ and calculate a percentile solution to~\eqref{eq:augmented_NMPC} with $N_p=200$ samples, Figure~\ref{fig:mpc_gap_info}-(a); $N_p=300$ samples, Figure~\ref{fig:mpc_gap_info}-(b); and $N_p=500$ samples, Figure~\ref{fig:mpc_gap_info}-(c).  We calculate the optimality gap for each solution by performing gradient descent on the best out of $2000$ uniformly chosen samples and reporting the final value as the true optimal value.  According to Theorem~\ref{thm:probabilistic_ub_optimalitygap}, in each case, we should produce an upper bound on optimality gaps $\gapRVsample^*_R$ that exceeds sample-able optimality gaps $\gapRVsample$ with at least $99\%$ probability and $99\%$ confidence.  To validate this statement, we uniformly sampled $50000$ more environments $d \in \mathcal{D}$ and followed the prior scheme for each percentile case to determine the distribution of sample-able optimality gaps.  This data is reflected as the distributional data you see in each subfigure in Figure~\ref{fig:mpc_gap_info}.  As can be seen, in each case the reported upper bound $\gapRVsample^*_R$ exceeds the true $99\%$ cutoff.  Furthermore, as the number of samples taken for the percentile solution increases, the upper bound decreases.  This is expected as we are providing a solution in a higher percentile each time.  To emphasize the utility of this result for controls, say we wished to implement a percentile procedure with $N_p=300$ samples to provide ``good" waypoints optimizing for~\eqref{eq:augmented_NMPC}.  Offline calculation of this optimality gap would provide confidence that in practice, we would, with $99\%$ probability and with $99\%$ confidence, be choosing waypoints within $0.03m$ of the optimal waypoint at every iteration.  Notably, we would not have to solve the non-convex program and repetitively sample variances at each time step to make this statement.

\section{Conclusion}
We provide a method to bound the optimality gaps of percentile methods to a wide class of non-convex optimization problems and explore the utility of such approaches for control in an NMPC setting.  We also verify our mathematical results on several benchmark optimization problems including the traveling salesman problem.  In future work, we hope to decrease the conservatism of our bounds and formally prove the existence of a baseline probability for certain classes of optimization problems, \textit{i.e.} remove the requirement on our fairness assumption preceding our theoretical statements.

\bibliographystyle{IEEEtran}
\bibliography{IEEEabrv,bib_works}

\end{document}